\definecolor{lightgray}{rgb}{0.9, 0.9, 0.9}
\definecolor{darkgray}{rgb}{0.7, 0.7, 0.7}
\definecolor{darkblue}{rgb}{0, 0, .4}
\newtheorem{theorem}{Theorem}
\newtheorem{proposition}[theorem]{Proposition}
\newcounter{todocounter}
\newfont{\footsc}{cmcsc10 at 8truept}
\newfont{\footbf}{cmbx10 at 8truept}
\newfont{\footrm}{cmr10 at 10truept}
\renewenvironment{abstract}%
		{
		  \begin{list}{}%
		     {\setlength{\rightmargin}{1in}%
		      \setlength{\leftmargin}{1in}}%
		   \item[]\ignorespaces\begin{small}}%
		 {\end{small}\unskip\end{list}}
\keywords{permutation class, rational generating function, regular language}
\title{\sc{Finding Regular Insertion Encodings for Permutation Classes}}
\author{\sc{Vincent Vatter}\\
\small Department of Mathematics\\[-1pt]
\small Dartmouth College\\[-1pt]
\small Hanover, New Hampshire USA\\[-10pt]}
\date{}
\begin{document}
\maketitle

\pagestyle{main}

\newcommand{\Av}{\operatorname{Av}}
\newcommand{\Age}{\operatorname{Age}}
\newcommand{\A}{\mathcal{A}}
\newcommand{\C}{\mathcal{C}}
\newcommand{\D}{\mathcal{D}}
\newcommand{\E}{\mathcal{E}}
\newcommand{\HH}{\mathcal{H}}
\newcommand{\I}{\mathcal{I}}
\newcommand{\J}{\mathcal{J}}
\newcommand{\K}{\mathcal{K}}
\renewcommand{\L}{\mathcal{L}}
\newcommand{\M}{\mathcal{M}}
\newcommand{\N}{\mathcal{N}}
\renewcommand{\P}{\mathcal{P}}
\newcommand{\R}{\mathcal{R}}
\renewcommand{\S}{\mathcal{S}}
\renewcommand{\O}{\mathcal{O}}
\newcommand{\W}{\mathcal{W}}
\newcommand{\gr}{\mathrm{gr}}
\newcommand{\ip}{\operatorname{ip}}
\newcommand{\lgr}{\underline{\gr}}
\newcommand{\ugr}{\overline{\gr}}
\newcommand{\Grid}{\operatorname{Grid}}
\newcommand{\zpm}{0/\mathord{\pm} 1}
\newcommand{\st}{\operatorname{st}}
\newcommand{\m}{\mathbf{m}}
\renewcommand{\l}{\mathbf{l}}
\renewcommand{\r}{\mathbf{r}}
\newcommand{\f}{\mathbf{f}}

\begin{abstract}
We describe a practical algorithm which computes the accepting automaton for the insertion encoding of a permutation class, whenever this insertion encoding is regular.  This algorithm is implemented in the accompanying Maple package {\sc InsEnc}, which can automatically compute the rational generating functions for such classes.
\end{abstract}

\section{Introduction}

Permutation classes, or restricted permutations, have received considerable attention over the past two decades, and during this time a great variety of techniques have been used to enumerate them.  One of the most popular approaches, pioneered by Chung, Graham, Hoggatt, and Kleiman~\cite{chung:the-number-of-b:}, employs generating trees.  The permutation classes with finitely labeled generating trees were characterized in Vatter~\cite{vatter:finitely-labele:}.  A more powerful technique based on formal languages and called the insertion encoding was later introduced by Albert, Linton, and Ru\v{s}kuc~\cite{albert:the-insertion-e:}.  While they characterized the classes that possess regular insertion encodings, naively employing their techniques requires the determinization of non-deterministic automata several times, and no implementation has been available.  We study regular insertion encodings from a new point of view, essentially focusing on accepting automata instead of languages.  This leads both to an implementation (the Maple package {\sc InsEnc}, available for download from the author's homepage) and to a new proof of the characterization of permutation classes with regular insertion encodings.

We begin with definitions.  Two sequences of natural numbers are said to be {\it order isomorphic\/} if they have the same pairwise comparisons, so $9,1,6,7,2$ is order isomorphic to $5,1,3,4,2$.  Every sequence $w$ of natural numbers without repetition is order isomorphic to a unique permutation that we denote by $\st(w)$, so $\st(9,1,6,7,2)=5,1,3,4,2$, which we shorten to $51342$.  We call $\st(w)$ the {\it standardization\/} of $w$.  We further say that the permutation $\pi$ {\it contains\/} the permutation $\beta$ if $\pi$ contains a subsequence that is order isomorphic to $\beta$, and in this case we write $\beta\le\pi$. For example, $391867452$ contains $51342$, as can be seen by considering the subsequence $91672$.  If $\pi$ does not contain $\beta$, then $\pi$ is said to {\it avoid\/} $\beta$.

A {\it permutation class\/} is a lower order ideal in the containment ordering, meaning that if $\pi$ is contained in a permutation in the class, then $\pi$ itself lies in the class.  Permutation classes can be specified in numerous ways, but we focus solely on the most common method, in which the minimal permutations {\it not\/} in the class are given (this set is called the {\it basis\/}).  By the minimality condition, bases are necessarily {\it antichains\/}, meaning that no element of a basis is contained in another.  Although there are infinite antichains of permutations (see Atkinson, Murphy, and Ru\v{s}kuc~\cite{atkinson:partially-well-:} for constructions and references to earlier work), we restrict our attention to finitely based classes.  Given a set of permutations $B$, we define $\Av(B)$ to be the set of permutations that avoid every permutation in $B$.  Thus if $\C$ is a closed class with basis $B$ then $\C=\Av(B)$, and for this reason the elements of a permutation class are often referred to as {\it restricted permutations\/}.  We let $\C_n$ denote the set of permutations of length $n$ in $\C$ and refer to $\sum |\C_n|x^n$ as the generating function of $\C$.  All generating functions herein include the empty permutation of length $0$.

\section{Finitely Labeled Generating Trees}

In the generating tree approach to enumerating $\Av(B)$, the first step is to construct the {\it pattern-avoidance tree\/} $T(B)$ in which the children of the permutation $\pi\in\Av_{n-1}(B)$ are all permutations in $\Av_n(B)$ which can be formed by inserting $n$ into $\pi$.  The {\it active sites\/} of the permutation $\pi\in\Av_{n-1}(B)$ are defined as the indices $i$ for which inserting $n$ immediately before $\pi(i)$ produces a $B$-avoiding permutation (we also allow for $n$ to be an active site).  Thus every permutation has as many children in the pattern-avoidance tree as it has active sites.  A {\it principle subtree\/} in $T(B)$ is a subtree consisting of a single permutation and all of its descendants.

If it happens that every principle subtree in the pattern-avoidance tree $T(B)$ belongs to one of a finite number of isomorphism classes, then the (rational) generating functions for $\Av(B)$ can be easily computed using the transfer matrix method (see Flajolet and Sedgewick~\cite[Section V.6]{flajolet:analytic-combin:}).  Following \cite{vatter:finitely-labele:}, we say that the entry $\pi(i)$ of $\pi$ is {\it generating-tree-reducible\/} (relative to $B$), or simply {\it GT-reducible\/}, if the principle subtree rooted at $\pi$ and the principle subtree rooted at $\st(\pi-\pi(i))$ are isomorphic as rooted trees.  (Here $\pi-\pi(i)$ denotes the sequence obtained by deleting the entry $\pi(i)$ from $\pi$.)  We can now state the main theorem of Vatter~\cite{vatter:finitely-labele:}:

\begin{theorem}[Vatter~\cite{vatter:finitely-labele:}]\label{thm-finlabel-main}
For a finite set $B$ of permutations, the following are equivalent:
\begin{enumerate}
\item[(1)] $B$ contains both a child of an increasing permutation and a child of a decreasing permutation,
\item[(2)] there is an integer $k$ such that no node of $T(B)$ has more than $k$ children,
\item[(3)] every sufficiently long permutation in $\Av(B)$ contains a GT-reducible entry,
\item[(4)] $T(B)$ has only finitely many isomorphism classes of principle subtrees (in other words, $\Av(B)$ has a {\it finitely labeled generating tree\/}).
\end{enumerate}
\end{theorem}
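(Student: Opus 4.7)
The plan is to establish the cyclic chain of implications $(4) \Rightarrow (2) \Rightarrow (1) \Rightarrow (3) \Rightarrow (4)$. Three of the four links admit short arguments, and the real content lies in $(1) \Rightarrow (3)$.

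For the easy directions: $(4) \Rightarrow (2)$ follows because each isomorphism class of principle subtrees has a fixed root-degree, so the maximum over the finitely many classes supplies the desired bound $k$. For $(3) \Rightarrow (4)$, I would induct on length: a sufficiently long $\pi \in \Av(B)$ has a GT-reducible entry $\pi(i)$, so its principle subtree is isomorphic to the one at the strictly shorter permutation $\st(\pi - \pi(i))$, and iteration shows that every principle subtree is isomorphic to one rooted at a permutation below the threshold length, of which there are only finitely many. For the contrapositive of $(2) \Rightarrow (1)$, if $B$ contains no child of any increasing permutation then in particular $B$ contains no increasing permutation at all (each such permutation, of length $\ge 1$, is itself a child of a shorter increasing permutation), so $12 \cdots n \in \Av(B)$ for every $n$; moreover every insertion of $n+1$ into $12 \cdots n$ again yields a child of an increasing permutation, which remains $B$-avoiding, so all $n+1$ sites are active and the bound in (2) is violated. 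The decreasing case is symmetric.

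The main implication $(1) \Rightarrow (3)$ is where the effort concentrates. Assume $\beta^+, \beta^- \in B$ are children of an increasing and a decreasing permutation respectively, and set $M = \max(|\beta^+|, |\beta^-|)$. For $\pi \in \Av(B)$ of length sufficiently large in terms of $M$, the Erd\H{o}s--Szekeres theorem produces a monotone subsequence of $\pi$ far longer than $M$; by symmetry suppose it is increasing, with entries at positions $i_1 < \cdots < i_L$. The structural crux is that any sufficiently interior entry $\pi(i_j)$ should be GT-reducible: intuitively the long increasing subsequence carries so many redundant entries that every descendant of $\pi$ in $T(B)$ can be mirrored by a descendant of $\st(\pi - \pi(i_j))$, reassigning the role of $\pi(i_j)$ in any relevant witness to a neighbouring entry of the monotone subsequence, and conversely.

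The hard part is to promote this intuition into a genuine isomorphism of rooted trees. This requires constructing a bijection between the descendants of $\pi$ and those of $\st(\pi - \pi(i_j))$ that commutes with the parent-child relation, and tracking not only which basis patterns are currently avoided but how \emph{future} insertions of new maxima can interact with the long monotone subsequence. The Erd\H{o}s--Szekeres bound must be chosen quantitatively in terms of $M$ so that enough spare entries always remain to serve as substitutes for $\pi(i_j)$ in any potential occurrence of $\beta^+$; the parallel use of $\beta^-$ handles the case of a long decreasing subsequence, which is precisely why condition (1) genuinely needs both kinds of child.
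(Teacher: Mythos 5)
The paper does not reprove this theorem (it is quoted from the reference), but it does fix the intended architecture: (3)$\implies$(4) and (4)$\implies$(2) are trivial, (1)$\iff$(2) is routine via Erd\H{o}s--Szekeres, and the entire content is (2)$\implies$(3). Your three short links are fine: (4)$\implies$(2) via the root-degree invariant, (3)$\implies$(4) by iterated reduction down to a bounded length, and the contrapositive of (2)$\implies$(1) by showing that $12\cdots n$ lies in $\Av(B)$ with all $n+1$ sites active. But your cyclic arrangement funnels all the difficulty into (1)$\implies$(3), which subsumes both the routine implication and the hard one, and at that point you stop: you state that ``any sufficiently interior entry $\pi(i_j)$ \emph{should be} GT-reducible,'' acknowledge that promoting this to a rooted-tree isomorphism is ``the hard part,'' and list what such a proof would need without supplying it. That is the theorem; the proposal as written does not prove it.

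Beyond being incomplete, the intuition you propose to complete is pointed in a direction that is unlikely to work. GT-reducibility of $\pi(i)$ is a statement about \emph{all} descendants of $\pi$ in $T(B)$, and membership of $\pi(i)$ in a long monotone subsequence gives no direct control over the occurrences of basis elements that $\pi(i)$ may participate in after arbitrarily many further insertions; ``reassigning its role to a neighbouring entry of the subsequence'' fails whenever the positions or values of the other entries of a potential copy of a basis element separate $\pi(i)$ from its would-be substitutes. The argument the paper actually models (see the proof of (2)$\implies$(3) of Theorem~\ref{thm-insenc-main}' later in the text, which is explicitly a parallel of this step) is a counting argument, not a structural one: one first secures the bound $k$ of condition (2), shows that a non-reducible entry admits a \emph{witness} of bounded length (bounded in terms of $|B|$'s longest element and $k$), observes that each witness can serve only boundedly many entries because every occurrence of a basis element it creates must use the deleted entry, and concludes that a sufficiently long permutation must contain an entry with no witness, which is therefore reducible. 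If you want to complete your proof, replacing the monotone-subsequence heuristic with an adaptation of that witness-counting scheme (and proving a decidability lemma analogous to Proposition~\ref{prop-insenc-test-IE} so that ``no short witness'' really implies tree isomorphism) is the viable route.
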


The implications (3)$\implies$(4) and (4)$\implies$(2) are trivial, while (1)$\iff$(2) follows routinely from the Erd\H{o}s-Szekeres Theorem; the main content of the theorem is that (2)$\implies$(3).

\section{The Insertion Encoding}

Before describing the insertion encoding, we briefly review regular languages and finite automata.  The classic results mentioned here are covered more comprehensively in many texts, for example, Flajolet and Sedgewick~\cite[Appendix A.7]{flajolet:analytic-combin:}, so we give only the barest details.

A {\it deterministic finite automaton (DFA)\/} $M$ over the alphabet $\Sigma$ consists of a set $S$ of {\it states\/}, one of which, $s_0$, is designated the {\it initial state\/}, a {\it transition function\/} $\delta:S\times \Sigma\rightarrow S$, and a subset $A\subseteq S$ designated as {\it accept states\/}.  We denote this by $M=(S,\Sigma,s_0,A,\delta)$.  It is useful to extend the definition of the transition function $\delta$ to a map $\delta: S\times \Sigma^*\rightarrow S$ in the obvious way.  We say that the state $t$ is {\it reachable\/} from the state $s$ if there is a word $w\in\Sigma^*$ such that $\delta(s,w)=t$; otherwise, $t$ is {\it unreachable\/} from $s$.  The automaton $M$ {\it accepts\/} the word $w\in\Sigma^*$ if $\delta(s_0,w)$ is an accept state.  The set of all such words is the {\it language accepted\/} by the automaton, $\L(M)$.

A language that is accepted by a finite automaton (deterministic or not) is called {\it recognisable\/}.  By Kleene's Theorem, the recognisable languages are precisely the {\it regular languages\/}, and our purposes the reader may simply take this as the definition of regular languages.  Regular languages have numerous pleasing properties, but for our purposes we only care that they have rational generating functions which may be computed from their accepting automata.

Central to the insertion encoding is the notion of a configuration.  Consider, for example, the permutation $\pi=423615$.  In the generating tree viewpoint, $\pi$ is a descendant of $4231$.  In the insertion encoding viewpoint, we note that $\pi$ is obtained from $4231$ by inserting entries between the $3$ and the $1$ and after the $5$.  Thus we say that $\pi$ evolves from the configuration $423\diamond 1\ \diamond$.  Formally, a {\it configuration\/} is a permutation together with zero or more $\diamond$ entries called {\it slots\/}, which may not be adjacent and must eventually be filled.  Permutations correspond to the {\it slotless configurations\/}.

Given a configuration, there are four different ways to insert a new maximum entry $m$ into the $i$th slot (we number slots from left to right) of a configuration.  We may insert this maximum entry into the middle of the slot (replacing the $\diamond$ with $\diamond m\diamond$), to the left of the slot (replacing the $\diamond$ with $m\diamond$), to the right of the slot (replacing the $\diamond$ with $\diamond m$), or we may fill the slot (replacing the $\diamond$ with $m$).  These four types of operations are denoted by $\m_i$, $\l_i$, $\r_i$, and $\f_i$, respectively.  This gives a unique encoding of every permutation, called the {\it insertion encoding\/}%
\footnote{This encoding actually dates back at least to the work of Fran{\c{c}}on and Viennot~\cite{francon:permutations-se:}, although instead of words, they used colored Motzkin paths.}.
For example, the insertion encoding of $423615$ is $\m_1\m_2\l_2\f_1\f_2\f_1$.  The insertion encoding of a permutation class $\Av(B)$ is the language consisting of the insertion encodings of every element of $\Av(B)$.  We say that a configuration is {\it valid\/} for $\Av(B)$ if it can be filled to produce a permutation in $\Av(B)$.

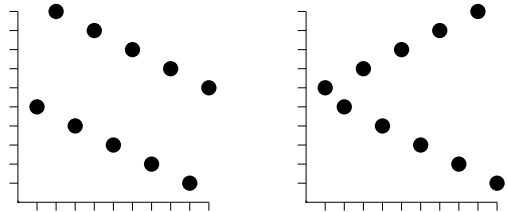
\begin{figure}
\begin{center}
\begin{tabular}{ccc}
\psset{xunit=0.01in, yunit=0.01in}
\psset{linewidth=0.005in}
\begin{pspicture}(0,0)(100,100)
\psaxes[dy=10, Dy=1, dx=10, Dx=1, tickstyle=bottom, showorigin=false, labels=none](0,0)(100,100)
\pscircle*(10,50){0.04in}
\pscircle*(20,100){0.04in}
\pscircle*(30,40){0.04in}
\pscircle*(40,90){0.04in}
\pscircle*(50,30){0.04in}
\pscircle*(60,80){0.04in}
\pscircle*(70,20){0.04in}
\pscircle*(80,70){0.04in}
\pscircle*(90,10){0.04in}
\pscircle*(100,60){0.04in}
\end{pspicture}
&\rule{10pt}{0pt}&
\psset{xunit=0.01in, yunit=0.01in}
\psset{linewidth=0.005in}
\begin{pspicture}(0,0)(100,100)
\psaxes[dy=10, Dy=1, dx=10, Dx=1, tickstyle=bottom, showorigin=false, labels=none](0,0)(100,100)
\pscircle*(10,60){0.04in}
\pscircle*(20,50){0.04in}
\pscircle*(30,70){0.04in}
\pscircle*(40,40){0.04in}
\pscircle*(50,80){0.04in}
\pscircle*(60,30){0.04in}
\pscircle*(70,90){0.04in}
\pscircle*(80,20){0.04in}
\pscircle*(90,100){0.04in}
\pscircle*(100,10){0.04in}
\end{pspicture}
\end{tabular}
\end{center}
\caption{A vertical parallel alternation (left) and a vertical wedge alternation (right).}
\label{fig-wedge-parallel}
\end{figure}

For any regular language $\L$, there is an integer $k$ such that if $w$ is a prefix of a word in $\L$, then $w$ is a prefix of a word in $\L$ with at most $k$ additional symbols%
\footnote{To see this, suppose that the accepting automaton, say $(S,\Sigma,s_0,A,\delta)$, for $\L$ has $k+1$ states.  If there is a path from $\delta(s_0,w)$ to an accept state, then there is a path from $\delta(s_0,w)$ to an accept state which does not revisit any states, and thus has length at most $k$.}
Thus for the insertion encoding of $\Av(B)$ to be regular, there must be a bound on the number of slots in valid configurations for $\Av(B)$; if $\Av(B)$ satisfies this constraint, we call it {\it slot-bounded\/}.  Thus $\Av(B)$ cannot contain arbitrarily long {\it vertical alternations\/}, which are permutations in which every even (resp., odd) indexed entry lies above every odd (resp., even) index entry.  By the Erd\H{o}s-Szekeres Theorem~\cite{erdos:a-combinatorial:}, every long vertical alternation contains a long vertical parallel alternation or a long vertical wedge alternation (see Figure~\ref{fig-wedge-parallel}), which makes it easy to check if the insertion encoding of a class needs only a finite alphabet%
\footnote{More explicitly, $\Av(B)$ contains only finitely many vertical parallel alternations oriented as in Figure~\ref{fig-wedge-parallel} if and only if $B$ contains a member of $\Av(123,3142,3412)$, while it contains only finitely many vertical wedge alternations oriented as in Figure~\ref{fig-wedge-parallel} if and only if $B$ contains a member of $\Av(132,312)$.  One must also check the reversals of these classes, $\Av(321,2143,2413)$ and $\Av(213,231)$.}.  This necessary condition is also sufficient:

\begin{theorem}[Albert, Linton, and Ru\v{s}kuc~\cite{albert:the-insertion-e:}]\label{thm-insenc-main}
For a finite set $B$ of permutations, the following are equivalent:
\begin{enumerate}
\item[(1)] $\Av(B)$ contains only finitely many vertical alternations,
\item[(2)] there is an integer $k$ such that no valid configuration for $\Av(B)$ has more than $k$ slots,
\item[(4)] the insertion encoding of $\Av(B)$ is regular.
\end{enumerate}
\end{theorem}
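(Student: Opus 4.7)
\emph{Proof proposal.} I would establish the cycle $(4)\Rightarrow(2)\Rightarrow(1)\Rightarrow(2)\Rightarrow(4)$, the first implication coming essentially for free from the footnote preceding the theorem: if the minimal DFA has $N$ states, every prefix of an accepted word extends to a full word in at most $N-1$ additional symbols, and since a configuration with $j$ slots requires at least $j$ further insertions to fill each slot, we must have $j\le N-1$. For $(2)\Rightarrow(1)$, I would observe that reading the insertion encoding of a vertical alternation $\pi\in\Av(B)$ of length $2n$ up through the operation inserting the value $n+1$ leaves a configuration whose slotless part is a permutation of $1,\ldots,n$ and which has $n$ slots awaiting the $n$ largest entries; thus $(2)$ forces $n\le k$. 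The reverse $(1)\Rightarrow(2)$ is then the contrapositive: for any valid configuration with many slots, filling the slots in an appropriate order with successive new maxima yields a permutation in $\Av(B)$ that realizes a long vertical alternation.

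The remaining direction $(2)\Rightarrow(4)$ is the technical core, and I would approach it via the Myhill--Nerode theorem. Declare two prefixes $u,v$ equivalent if for every continuation $w$, $uw$ encodes a permutation of $\Av(B)$ exactly when $vw$ does; regularity is equivalent to this equivalence having finitely many classes. Let $N=\max_{\beta\in B}|\beta|$. To each valid prefix $u$ I would associate its configuration $c(u)$ and then extract a bounded \emph{summary}. Under $(2)$, $c(u)$ has at most $k$ slots, so its entries partition into at most $k+1$ contiguous regions. The summary records, for each region, the set of order-isomorphism types of subsequences of length at most $N$ occurring in it, together with the relative order across regions of the boundary-adjacent entries. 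The justification is that any new $B$-pattern produced by a continuation $w$ must involve at least one future entry (else it was already present in $c(u)$, contradicting validity), all future entries exceed all present ones, and their horizontal positions are dictated by how $w$ fills the slots; consequently at most $N-1$ entries of $c(u)$ can participate in any new $B$-pattern, and whether such a pattern ever arises is determined by the summary. Since the number of possible summaries is finite, so is the number of Myhill--Nerode classes, and the insertion encoding is regular.

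The principal obstacle is the bookkeeping behind $(2)\Rightarrow(4)$: one must formalize the summary so that it is simultaneously (a) a \emph{congruence} under the operations $\m_i,\l_i,\r_i,\f_i$ (applying the same operation to two configurations with equal summaries yields two configurations with equal summaries), and (b) fine enough that two configurations with the same summary really do admit identical continuations in $\Av(B)$. The delicate case is $B$-patterns that straddle several regions, with some entries drawn from $c(u)$ and others from future insertions on both sides of multiple filled slots; here one must verify that the inter-region order data retained in the summary genuinely suffice to detect all such straddling patterns, and that operations affect the summary in a way depending only on the summary itself. Once this is carried out, the summaries serve directly as the states of an explicit DFA, which is precisely what the algorithmic implementation advertised in the introduction requires.
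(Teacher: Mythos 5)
Your easy implications line up with the paper's: $(4)\Rightarrow(2)$ is exactly the footnote's argument, and $(1)\Leftrightarrow(2)$ via vertical alternations is what the paper treats as routine (one caution on $(1)\Rightarrow(2)$: filling each slot with a single new maximum need not itself produce a permutation of $\Av(B)$; you should take a valid completion, which exists by the definition of validity, and extract one new entry per slot and one old entry from each gap between consecutive slots, then use closure under containment). The substantive divergence is in $(2)\Rightarrow(4)$. The paper does not build a Myhill--Nerode congruence on prefixes directly; it proves the stronger intermediate statement that every sufficiently long valid configuration $c$ contains an IE-reducible entry $c(i)$, i.e.\ one for which $c$ and $\st(c-c(i))$ are indistinguishable. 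The argument is a counting one: a non-reducible entry $c(i)$ (with $c(i)$ neither a slot nor adjacent to one) admits a ``witness'' word of length at most $b+k-1$, where $b$ is the longest basis length, leading from $\st(c-c(i))$ to a permutation in $\Av(B)$ but from $c$ to an invalid configuration; each witness can serve at most $b$ indices, since every $B$-occurrence it creates must use $c(i)$; and witnesses range over a finite set, so a long enough configuration has a witness-less, hence reducible, entry. This simultaneously yields regularity and the deletion-based algorithm that is the point of the paper.

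Your profile-based congruence can be made to work, but the summary as you define it is too coarse, precisely at the spot you flag as delicate. Recording, for each region separately, the patterns of length at most $N$ it contains, plus only the relative order of boundary-adjacent entries across regions, discards the relative values of interior entries lying in different regions. Concretely, for $c = x\,a\,y\ \diamond\ z\,b\,w$ with $a$ and $b$ not adjacent to the slot, whether inserting one new maximum into the slot creates a given pattern of length three using $a$, $b$, and the new entry depends on whether $a<b$ or $a>b$, and your summary does not determine this. The repair is to record, for the configuration as a whole, the set of order-isomorphism types of its subsequences of length at most $N$ with each entry labelled by the region it occupies (equivalently, by its position relative to the slots); with at most $k$ slots this is still bounded data, it is a congruence under $\m_i$, $\l_i$, $\r_i$, $\f_i$, and it is fine enough because any new $B$-occurrence uses at least one future entry, all future entries exceed all present ones, and their horizontal positions relative to present entries are determined by which slot they enter. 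With that correction your argument does establish $(2)\Rightarrow(4)$, by a genuinely different route from the paper: you get regularity without proving the paper's condition $(3)$ on IE-reducible entries, and hence without the compact explicit automaton that the implementation relies on.
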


\section{Our Approach to the Insertion Encoding}

While Albert, Linton, and Ru\v{s}kuc considered the insertion encoding from the viewpoint of formal languages, our approach parallels that of Theorem~\ref{thm-finlabel-main}, and borrows terminology from the minimization of DFAs.

Given a DFA $M=(S,\Sigma,s_0,A,\delta)$ over the alphabet $\Sigma$ with no unreachable states, we say that $w\in\Sigma^*$ {\it distinguishes\/} between two states $s$ and $t$ of $M$ if $\delta(s,w)\in\L(M)$ and $\delta(t,w)\notin\L(M)$, or vice versa.  Two states are called {\it indistinguishable\/} if there is no word which distinguishes them.  This defines an equivalence relation on the states of $M$; for a state $s\in S$, we let $[s]$ denote the equivalence class of all states which are indistinguishable from $s$.  By the Myhill-Nerode Theorem, this equivalence relation describes the {\it minimal automation for $\L(M)$\/}, that is, the automaton with the minimum possible number of states which accepts the language $\L(M)$.

\newtheorem*{myhill-nerode-theorem}{The Myhill-Nerode Theorem~\cite{myhill:finite-automata:, nerode:linear-automato:}}
\begin{myhill-nerode-theorem}
Let $M=(S,\Sigma,s_0,A,\delta)$ be a DFA with no states which are unreachable from $s_0$.  The DFA $\tilde{M}=(\tilde{S},\Sigma,[s_0],\tilde{A},\tilde{\delta})$ where $\tilde{S}=\{[s] : s\in S\}$, $\tilde{A}=\{[s] : s\in A\}$, and $\tilde{\delta}([s],w)=[sw]$ is the minimum DFA for $\L(M)$.
\end{myhill-nerode-theorem}

Suppose that we are given the basis $B$ for a slot-bounded class $\Av(B)$, and that we would like to construct the accepting automaton for the insertion encoding of $\Av(B)$.  We could build an infinite accepting automaton in which the states of the automata are the valid configurations for $\Av(B)$.  The initial state would be $\diamond$, while the accept states would be the slotless configurations, and the transitions would be the obvious transitions given by inserting in the middle of a slot, to the left or right, or filling the slot.

In order to construct a finite automaton which accepts the insertion encoding of $\Av(B)$, we essentially minimize this infinite automaton, although we focus only on a special type of indistinguishable states.  We say that the entry $c(i)$ in the configuration $c$ is {\it insertion-encoding-reducible\/} (relative to $B$), or simply {\it IE-reducible\/} if $c$ is indistinguishable from $\st(c-c(i))$, where here we have extended the notion of standardization to states in the obvious manner, e.g., $\st(9\diamond 1672\ \diamond)=5\diamond 1342\ \diamond$.  Note, trivially, that $c(i)$ will not be IE-reducible if it is a $\diamond$ or if has $\diamond$s to both of its sides, as then $c(i)$ and $\st(c-c(i))$ will have a different number of slots.

In the next proposition, which verifies that IE-reducibility is decidable, we say that the word $w$ {\it weakly distinguishes\/} the states $s$ and $t$ if one can reach an accept state from $\delta(s,w)$ but not from $\delta(t,w)$, or vice versa.  Note that a weakly distinguishing word need not distinguish the two states, but its existence implies that the states are distinguished by some word.

\begin{proposition}\label{prop-insenc-test-IE}
Let $c$ be a valid configuration for $\Av(B)$ and suppose that the longest element of $B$ has length $b$.  If the entry $c(i)$ is neither a $\diamond$ nor adjacent to two $\diamond$s, then it is IE-reducible if and only if no word of length at most $b-1$ weakly distinguishes $c$ and $\st(c-c(i))$.
\end{proposition}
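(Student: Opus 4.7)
The plan is to prove the two directions separately. For the forward direction, I observe that IE-reducibility of $c(i)$ means $c$ and $c':=\st(c-c(i))$ are indistinguishable, and since indistinguishability propagates under $\delta$—if $\delta(c,v)$ and $\delta(c',v)$ ever split on reachability of an accept state, say via some continuation $y$, then $vy$ would distinguish $c$ and $c'$—no word of any length can weakly distinguish $c$ and $c'$, a fortiori none of length at most $b-1$.

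For the converse I argue the contrapositive. Using the hypothesis that $c(i)$ is neither a $\diamond$ nor flanked by two $\diamond$s, $c$ and $c'$ share the same slot sequence, so every operation acts identically on them; moreover any good filling of $cu$ restricts by deletion of $c(i)$ to a good filling of $c'u$, so ``$cu$ valid'' always implies ``$c'u$ valid''. Hence failure of IE-reducibility yields a word $w$ with $cw$ invalid and $c'w$ valid. Fixing any good filling $\pi'\in\Av(B)$ of $c'w$ (obtained by further operations $u$) and the corresponding filling $\pi$ of $cw$, I note that since $\pi'=\st(\pi-c(i))\in\Av(B)$ while $\pi\notin\Av(B)$, $\pi$ must contain some $\beta\in B$ whose every occurrence uses $c(i)$ together with at most $b-1$ other entries.

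From this I construct the short candidate $w_0$ as the subsequence of $w\cdot u$ consisting of exactly the operations that insert those ``other'' $\beta$-entries, preserved in their original relative order but with slot indices readjusted to act on the appropriate slots of the intermediate configurations under the reduced sequence. Then $|w_0|\le b-1$, and $cw_0$ has the full $\beta$-occurrence among its non-slot entries in the correct relative positions, so any filling of $cw_0$ still contains $\beta$ and hence $cw_0$ is invalid.

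The hard part will be verifying that $c'w_0$ is still valid. My plan is to construct an explicit good filling of $c'w_0$ by applying the non-extracted operations of $w\cdot u$ in their original relative order, with slot readjustment, producing a slotless permutation $\pi''$ whose entries occupy the same positions as in $\pi'$; within each of the three classes (entries of $c'$, the extracted $\beta$-entries, and the remaining insertions) the relative value-order agrees with that in $\pi'$, and only the value-interleaving between the latter two classes is altered. I would then need a careful argument that this reinterleaving cannot create a new $B$-occurrence, exploiting that the non-slot entries of $c'w_0$ form a sub-permutation of $\pi'\in\Av(B)$ and that the $c'$-entries together with the remaining insertions realize the same sub-permutation of $\pi'$, so that any would-be new pattern must mix both classes of insertions in a manner constrained by the structure of the insertion encoding. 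Once $\pi''\in\Av(B)$ is secured, $w_0$ is a weakly distinguishing word of length at most $b-1$, completing the contrapositive.
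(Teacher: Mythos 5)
Your overall route is the paper's: the forward direction via ``weakly distinguishing implies distinguishing,'' and the converse via the contrapositive, reducing to the case where $\delta(c',w)$ is an accepting permutation $\pi'$ while $\delta(c,w)$ is a permutation $\pi\notin\Av(B)$ (using that $c$ and $c'$ have identical slot structure and that deleting $c(i)$ preserves membership in $\Av(B)$), then locating a copy of some $\beta\in B$ through $c(i)$ and extracting the at most $b-1$ letters that insert its other entries. Up to that point your argument matches the paper step for step and is fine.

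The problem is that the proposal stops exactly at the step that carries the weight of the proposition. To conclude that $w_0$ weakly distinguishes $c$ from $c'$ you must show that an accept state is reachable from $\delta(c',w_0)$, i.e.\ that $\delta(c',w_0)$ is a valid configuration, and your writeup ends by saying you ``would then need a careful argument'' for this --- that is a statement of the remaining obligation, not a proof of it. Your diagnosis of the difficulty is correct and worth keeping: when the omitted letters of $w$ (and of your continuation $u$) are re-applied after $w_0$, the insertion encoding forces the restored entries to enter as new maxima, so the resulting permutation is \emph{not} $\pi'$ itself but a re-interleaving of it in which the deferred entries have been pushed above the $\beta$-entries; one must argue that some such completion still avoids $B$. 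The paper dispatches this in a single sentence (``by ignoring all entries of $\pi$ which are neither in $c$ nor in this copy of $\beta$\dots''), taking the restorability of the ignored insertions for granted, so you have correctly isolated the one nontrivial verification --- but until you supply the argument that the reachable completions of $\delta(c',w_0)$ include a $B$-avoider (for instance by showing how the freedom to choose the relative order and pattern of the restored maxima can be exploited, or by choosing the copy of $\beta$ and the word $w$ to minimize the interference), the converse direction is not established. As submitted, the proof has a genuine gap at this point.
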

\begin{proof}
If the entry $c(i)$ is IE-reducible then $c$ and $\st(c-c(i))$ are equivalent, so they are not weakly distinguished by any words, let alone those of length at most $b-1$.

Suppose then that the entry $c(i)$ is not IE-reducible, so $c$ and $\st(c-c(i))$ are distinguished by some word $w$.  If $\delta(c,w)$ is an accept state then $\delta(\st(c-c(i)), w)$ must be as well, so we may assume that $\delta(c,w)$ is not an accept state but $\delta(\st(c-c(i)), w)$ is.  Because $c$ and $\st(c-c(i))$ have the same number of slots (since $c(i)$ is neither a $\diamond$ nor adjacent to two $\diamond$s), this means that $\delta(c,w)$ is a permutation, say $\pi\notin\Av(B)$.  Choose some copy of a basis element $\beta\in B$ in $\pi$.  Because $\st(\pi-c(i))\in\Av(B)$, this copy of $\beta$ contains $c(i)$ together with at most $b-1$ other entries.  By ignoring all entries of $\pi$ which are neither in (the underlying permutation of) $c$ nor in this copy of $\beta$, we see that there is a word $v$ of length at most $b-1$ which weakly distinguishes $c$ from $\st(c-c(i))$, as desired.
\end{proof}

We are now ready to state and prove our strengthening of Theorem~\ref{thm-insenc-main}.

\newtheorem*{thm-insenc-main-reducible}{Theorem~\ref{thm-insenc-main}'}
\begin{thm-insenc-main-reducible}
For a finite set $B$ of permutations, the following are equivalent:
\begin{enumerate}
\item[(1)] $\Av(B)$ contains only finitely many vertical alternations,
\item[(2)] there is an integer $k$ such that no valid configuration for $\Av(B)$ has more than 
$k$ slots,
\item[(3)] every sufficiently long configuration contains an IE-reducible entry,
\item[(4)] the insertion encoding of $\Av(B)$ is regular.
\end{enumerate}
\end{thm-insenc-main-reducible}

As with Theorem~\ref{thm-finlabel-main}, note that three of the implications in Theorem~\ref{thm-insenc-main}' are trivial.  We have already remarked that (4)$\implies$(2) and (2)$\iff$(1), while (3)$\implies$(4) because (3) shows that the insertion encoding for $\Av(B)$ has a finite accepting automaton.  Only (2)$\implies$(3) remains.

\newenvironment{thm-insenc-main-reducible-proof}{\medskip\noindent {\it Proof that (2)$\implies$(3) in Theorem~\ref{thm-insenc-main}'.\/}}{\qed\bigskip}
\begin{thm-insenc-main-reducible-proof}
Suppose that the longest element of $B$ has length $b$.  We are given that no valid configuration (for $\Av(B)$) has more than $k$ slots for some $k$, and must show that every sufficiently long configuration contains an IE-reducible entry.

Given a configuration $c$ of length $n$, let $I\subseteq[n]$ denote the set of indices such that $c(i)$ is neither a $\diamond$ nor adjacent to a $\diamond$.  Note that since no valid configuration has more than $k$ slots, all but a bounded number of indices lie in $I$.

If $c(i)$ is not IE-reducible for some $i\in I$, then the proof of Proposition~\ref{prop-insenc-test-IE} shows that there is a word $u$ of length at most $b-1$ which leads to a valid configuration from $\st(c-c(i))$ but not from $c$.  Since $\delta(\st(c-c(i)), u)$ is a valid configuration with at most $k$ states, there must be a word $v$ of length at most $k$ which leads from this configuration to a valid slotless configuration, i.e., a permutation in $\Av(B)$.  Therefore, the word $w=uv$ leads from $\st(c-c(i))$ to a permutation in $\Av(B)$, while it leads to an invalid configuration from $c$.  We call $w$ a {\it witness\/} for $i$.  Every occurrence of an element of $B$ in $\delta(c,w)$ must include the entry $c(i)$, and so no word may witness more than $b$ elements of $I$.  Therefore, since there are a bounded number of possible witnesses (they can each have length at most $b+k-1$), each can witness at most $b$ indices of $I$, and $I$ contains all but a bounded number of indices, for $n$ sufficiently large, there must be at least one index of $I$ without a witness.  Clearly this witness-less element of $I$ is IE-reducible, completing the proof.
\end{thm-insenc-main-reducible-proof}

It follows easily from the definitions that if $\pi(i)$ is GT-reducible (relative to $B$) for the permutation $\pi$, then in every configuration whose underlying permutation is $\pi$, the entry corresponding to $\pi(i)$ is IE-reducible.  This verifies that every class with a finitely labeled generating tree also has a regular insertion encoding.

\section{Counting Sum Indecomposable Permutations}

The permutation $\pi$ of length $n$ is {\it sum indecomposable\/} (or, {\it connected\/}) if there is no integer $2\le i\le n-1$ such that $\pi(\{1,2,\dots,i\})=\{1,2,\dots,i\}$.

As we describe first, it is fairly easy to characterize the sum indecomposable permutations via the insertion encoding.  The evolution of a sum {\it decomposable\/} permutation must contain a non-initial configuration whose only slot occurs at the end of the configuration.  Conversely, every permutation which can be formed from such a configuration is sum decomposable.  If $\Av(B)$ has a regular insertion encoding, then we know by Theorem~\ref{thm-insenc-main} that there is a constant $k$ such that no valid configuration for $\Av(B)$ has more than $k$ slots.  In order to recognize the sum {\it indecomposable\/} permutations in this class, we therefore need only to keep track of how many open slots a configuration has and whether the rightmost slot occurs at the end of the configuration, rejecting a permutation whenever its evolution includes a non-initial configuration whose only slot occurs at the end of the configuration.  It follows from the closure properties of regular languages that the sum indecomposable permutations in a class with a regular insertion encoding also have a regular insertion encoding.

While this shows that the sum indecomposable permutations in a class $\Av(B)$ with a regular insertion encoding themselves have a regular insertion encoding, it describes a rather circuitous route to this encoding.  Instead, a straight-forward adaptation of our approach leads directly to the accepting automaton for the insertion encoding of sum indecomposable permutations in $\Av(B)$.  Let us say that the element $c(i)$ of the configuration $c$ is {\it SIE-reducible\/} (relative to $B$) if it is IE-reducible and is not the rightmost entry of $c$.  It follows from our proof of Theorem~\ref{thm-insenc-main}' that every sufficiently long valid configuration for $\Av(B)$ has more than one IE-reducible entry, and so has at least one SIE-reducible configuration.  To construct the accepting automaton for the sum indecomposable permutations in $\Av(B)$, one therefore eliminates all configurations whose only slot occurs at the end, and identifies $c$ and $\st(c-c(i))$ whenever $c(i)$ is SIE-reducible.  This is also implemented in the package {\sc InsEnc}.

\section{Conclusion}

We have presented a new viewpoint of regular insertion encodings, which has lead to a new proof of Theorem~\ref{thm-insenc-main} and to an implementation in the Maple package {\sc InsEnc}, available from the author's homepage.  We conclude with some results obtained from this package.

Recall that the three permutation class symmetries inverse ($\pi\mapsto\pi^{-1}$), reverse $(\pi\mapsto\pi(n)\cdots\pi(2)\pi(1)$), and complement ($\pi\mapsto(n+1-\pi(1))(n+1-\pi(2))\cdots(n+1-\pi(n))$) generate the symmetries of the square.  Given a set of permutation classes, it is therefore useful to divide them into symmetry classes.  For example, the ${4!\choose 2}=276$ permutation classes with precisely two basis elements of length $4$ fall into $56$ distinct symmetry classes.  Two permutation classes are further said to be {\it Wilf-equivalent\/} if they are equinumerous.  Le~\cite{le:wilf-classes-of:} recently established that these $56$ symmetry classes form $38$ distinct Wilf classes.  Of these $38$, $12$ can be enumerated with regular insertion encodings.  Of those $12$, $10$ of those can be enumerated using finitely labeled generating trees, and their generating functions are reported in Vatter~\cite{vatter:finitely-labele:} (these generating functions were also computed by hand by Kremer and Shiu~\cite{kremer:finite-transiti:}).  The $2$ new generating functions are listed below.
$$
\begin{array}{l|l}
\mbox{Class}				&	\mbox{Generating function}\\[2.5pt]
\hline\hline\\[-8pt]
\Av(4321,1324)		&
	\frac{1-11x+56x^2-172x^3+357x^4-519x^5+554x^6-413x^7+217x^8-83x^9+20x^{10}-2x^{11}}{(1-x)^{12}}\\[2.5pt]
\Av(4321,3142)		&
	\frac{(1-x)(1-3x)^2}{(1-2x)^2(1-4x+x^2)}\\
\end{array}
$$

From the generating function displayed above, it follows that for large $n$, the number of permutations in $\Av(4321,1324)$ of length $n$ is given by a polynomial.  This is not a surprise, as it can be checked that this class meets the conditions of Huczynska and Vatter~\cite{huczynska:grid-classes-an:} or Albert, Atkinson, and Brignall~\cite{albert:permutation-cla:} who, building on the work of Kaiser and Klazar~\cite{kaiser:on-growth-rates:}, characterized the permutation classes of polynomial growth.  In fact, it follows from Theorem~\ref{thm-insenc-main} that all permutation classes with lower growth rate%
\footnote{The {\it lower growth rate\/} of the permutation class $\C$ is $\displaystyle\liminf_{n\rightarrow\infty}\sqrt[n]{|\C_n|}$.}
less than $2$ have regular insertion encodings.

\bigskip
\noindent{\bf Acknowledgments:} The author thanks Michael Albert for numerous helpful suggestions, and for pointing out the work of Fran{\c{c}}on and Viennot~\cite{francon:permutations-se:}.

\bibliographystyle{acm}
\bibliography{../refs}

\end{document}